\documentclass[12pt]{amsart}
\usepackage[headings]{fullpage}
\usepackage{amssymb,epic,eepic,epsfig,amsbsy,amsmath,amscd,graphicx}
\usepackage[all]{xy}

\numberwithin{equation}{section}
                        \textwidth16cm
                        \textheight23cm
                        \topmargin-1cm
                        \oddsidemargin 0.2cm
                        \evensidemargin 0.2cm
                        \theoremstyle{plain}
\usepackage{mathrsfs}

\newtheorem{theorem}{Theorem}[section]
\newtheorem{thm}{Theorem}
\newtheorem{lemma}[theorem]{Lemma}

\newtheorem{proposition}[theorem]{Proposition}
\newtheorem{conjecture}{Conjecture}

\theoremstyle{definition}
\newtheorem{remark}[theorem]{Remark}

\newcommand\no[1]{}

\def\BC{\mathbb C}

\def\BZ{\mathbb Z}

\def\CA{\mathcal A}

\def\CR{\mathcal R}

\def\CT{\mathcal T}

\def\fp{\mathfrak p}
\def\ft{\mathfrak t}

\def\ve{\varepsilon}
\def\be { \begin{equation} }
\def\ee { \end{equation} }

\begin{document}

\title[Strong AJ conjecture]{The strong AJ conjecture for cables of torus knots}

\author[Anh T. Tran]{Anh T. Tran}
\address{Department of Mathematics, The Ohio State University, Columbus, OH 43210, USA}
\email{tran.350@osu.edu}

\begin{abstract}
The AJ conjecture, formulated by Garoufalidis, relates the A-polynomial and the colored Jones polynomial of a knot in the 3-sphere. It has been confirmed for all torus knots, some classes of two-bridge knots and pretzel knots, and most cabled knots over torus knots. The strong AJ conjecture, formulated by Sikora, relates the A-ideal and the colored Jones polynomial of a knot. It was confirmed for all torus knots. In this paper we confirm the strong AJ conjecture for most cabled knots over torus knots.
\end{abstract}

\thanks{2010 {\em Mathematics Classification:} Primary 57N10. Secondary 57M25.\\
{\em Key words and phrases: colored Jones polynomial, A-polynomial, AJ conjecture, torus knot.}}

\maketitle

\setcounter{section}{-1}

\section{Introduction}

\subsection{The colored Jones polynomial} For a knot $K$ in $S^3$ and a positive integer $n$, let $J_K(n) \in \BZ[t^{\pm 1}]$ denote the $n$-colored Jones polynomial of $K$ with zero-framing. The polynomial $J_K(n)$ is the quantum link invariant, as defined by Reshetikhin and Turaev \cite{RT}, associated to the Lie algebra $sl_2(\BC)$, with the color $n$ standing for the irreducible $sl_2(\BC)$-module $V_{n}$ of dimension $n$. Here we use the functorial normalization, i.e. the one for which the colored Jones polynomial of the unknot $U$ is
$$J_U(n)=[n] := \frac{t^{2n}- t^{-2n}}{t^2 -t^{-2}}.$$ 

It is known that $J_K(1)=1$ and $J_K(2)$ is the usual Jones polynomial \cite{Jones}. The colored Jones polynomials of higher colors are more or less the usual Jones polynomials of parallels of the knot. The color $n$ can be assumed to take negative integer values by setting $J_K(-n) := - J_K(n)$. In particular, one has $J_K(0)=0$.

\subsection{The recurrence ideal and recurrence polynomial} Consider a discrete function $f: \BZ \to \CR:=\BC[t^{\pm 1}]$ and define the linear operators $L, M$ acting on such functions by
$$(Lf)(n) := f(n+1), \qquad (Mf )(n) := t^{2n} f(n).$$
It is easy to see that $LM = t^2 ML$, and that $L^{\pm 1}, M^{\pm 1}$ generate the quantum torus $\CT$, a non-commutative ring with presentation
$$ \mathcal T := \mathbb \CR\langle L^{\pm1}, M^{\pm 1} \rangle/ (LM - t^2 ML).$$

Let $$\mathcal A_K := \{ P \in \mathcal T \mid  P J_K=0\},$$ which is a left-ideal of $\mathcal T$, called the {\em recurrence ideal} of $K$. It was proved in \cite{GL} that for every knot $K$, the
recurrence ideal $\mathcal A_K$ is non-zero. An element in $\mathcal A_K$ is called a recurrence relation for the colored Jones polynomial of $K$.

The ring $\mathcal T$ is not a principal left-ideal domain, i.e. not every left-ideal of $\mathcal T$ is generated by one element. By adding all  inverses of polynomials in $t,M$ to
$\mathcal T$ one gets a principal left-ideal domain $\tilde\CT$, c.f. \cite{Ga04}. The ring $\tilde{\CT}$ can be formally defined as follows. Let
$\CR(M)$ be the fractional field of the polynomial ring $\CR[M]$.
Let $\tilde \CT$ be the set of all Laurent polynomials in the
variable $L$ with coefficients in $\CR(M)$:
$$\tilde
\CT :=\{\sum_{j\in \BZ}a_j(M) L^j \,\, | \quad a_j(M)\in \CR(M),
\,\,\, f_j=0  \quad \text{almost everywhere} \},
$$
and define the product in  $\tilde \CT$ by $a(M) L^{k} \cdot b(M)
L^{l} :=a(M)\, b(t^{2k}M) L^{k+l}.$

The left ideal extension $\tilde \CA_K :=\tilde \CT \CA_K$ of $\CA_K$ in
$\tilde \CT$ is then generated  by a polynomial
$$\alpha_K(t;M,L) = \sum_{j=0}^{d} \alpha_{K,j}(t,M) \, L^j,$$
where $d$ is assumed to be minimal and all the
coefficients $\alpha_{K,j}(t,M)\in \BZ[t^{\pm1},M]$ are assumed to
be co-prime. That $\alpha_K$ can be chosen to have integer
coefficients follows from the fact that $J_K(n) \in
\BZ[t^{\pm1}]$. The polynomial $\alpha_K$ is defined up to a polynomial in $\mathbb Z[t^{\pm 1},M]$. We call $\alpha_K$ the {\em recurrence polynomial} of $K$.

\subsection{The AJ conjecture} The colored Jones polynomials are powerful invariants of knots, but little is known about their relationship with classical invariants like the fundamental group. Inspired by the theory of noncommutative A-ideals of Frohman, Gelca and Lofaro \cite{FGL, Ge} and the theory of $q$-holonomicity of quantum invariants of Garoufalidis and Le \cite{GL}, Garoufalidis \cite{Ga04} formulated the following conjecture that relates the A-polynomial and the colored Jones polynomial of a knot in the 3-pshere.

\begin{conjecture}{\bf (AJ conjecture)} For every knot $K$ in $S^3$, $\alpha_K \mid_{t=-1}$ is equal to the $A$-polynomial, up to a polynomial depending on $M$ only.
\end{conjecture}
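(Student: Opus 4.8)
The plan is to study how the recurrence ideal $\CA_K$ behaves under the classical specialization $t=-1$ and to match the result with the character-variety origin of the A-polynomial. At $t=-1$ the defining relation $LM=t^2ML$ collapses to $LM=ML$, so the quantum torus $\CT$ degenerates to the commutative Laurent ring $\BC[L^{\pm1},M^{\pm1}]$, and $\alpha_K\mid_{t=-1}$ becomes an honest two-variable Laurent polynomial in commuting $L,M$ — the very variables (eigenvalues of the longitude and meridian on the boundary torus) in which the A-polynomial $A_K(M,L)$ lives. The first step is therefore to give a geometric reading of $\alpha_K\mid_{t=-1}$ as a polynomial cutting out a subvariety of the boundary torus, and the natural vehicle is the Kauffman bracket skein module: since the colored Jones polynomials arise by cabling $K$ with Chebyshev/Jones--Wenzl idempotents, a recurrence relation for $J_K$ is the image, pushed in from the boundary, of a relation in the skein module of the complement $X=S^3\setminus K$.

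The second step is to invoke the theorem of Bullock--Przytycki--Sikora that the $t=-1$ specialization of the skein module of $X$ is, modulo its nilradical, the coordinate ring $\BC[\mathcal{X}(X)]$ of the $SL_2(\BC)$-character variety, while the skein module of $\partial X$ specializes to $\BC[\mathcal{X}(\partial X)]$. Under this identification $\CA_K$ specializes into the kernel of the restriction-induced map $\BC[\mathcal{X}(\partial X)]\to\BC[\mathcal{X}(X)]$, whose zero locus is precisely the image of $\mathcal{X}(X)$ in $\mathcal{X}(\partial X)$, and that image is exactly the locus whose defining polynomial is $A_K$. Carrying this through yields the easy inclusion: every classical recurrence vanishes on the A-polynomial locus, so $A_K$ divides $\alpha_K\mid_{t=-1}$ up to a factor depending only on $M$.

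The crux — and the reason the statement remains open in full generality — is the reverse inclusion together with a non-degeneracy guarantee. One must show, first, that the leading coefficient $\alpha_{K,d}(t,M)$ does not vanish identically at $t=-1$, so that nothing is lost when the minimal-degree generator is specialized; and second, that $\alpha_K\mid_{t=-1}$ picks up no spurious factors beyond those in $M$ alone, i.e.\ that the quantum recurrence does not over-constrain relative to the classical peripheral ideal. The genuine obstacle is that passing to the classical limit does not commute with extracting the minimal generator of a left ideal: the extension $\tilde\CA_K=\tilde\CT\,\CA_K$ may acquire denominators in $M$ that degenerate at $t=-1$, and there is no a priori reason the colored Jones polynomial detects every component of $\mathcal{X}(X)$ meeting the boundary. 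I expect no purely formal skein-theoretic argument to close this gap; a proof valid for every $K$ would need an external input bounding the discrepancy between the $q$-holonomic rank of $J_K$ and the degree data of $\mathcal{X}(X)$, and such control is at present available only for restricted families — torus knots, some two-bridge and pretzel knots, and the cabled knots treated in this paper.
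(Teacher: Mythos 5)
The statement you were given is not a theorem of this paper at all: it is Garoufalidis's AJ conjecture, stated as a conjecture, and the paper never proves it. The paper only uses it as external input --- the known confirmations for torus knots (Hikami, Tran), certain two-bridge and pretzel knots (Le, Le--Tran), and, crucially here, the Ruppe--Zhang confirmation for cabled knots over torus knots, which enters as the hypothesis ``Suppose the AJ conjecture holds true for $K$'' in Lemma \ref{AJ}. So your decision not to claim a complete proof, and your closing assessment that the statement is open in general and known only for restricted families, is the correct call and is consistent with the paper's treatment.

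There is, however, a genuine error in your sketch of the ``known half'': you have the skein-theoretic inclusion backwards. You assert that a recurrence relation for $J_K$ is ``the image, pushed in from the boundary, of a relation in the skein module of the complement,'' so that $\ve(\CA_K)$ lands inside the classical peripheral ideal $\fp=\ker\bigl(\BC[\chi(\partial X)]\to\BC[\chi(X)]\bigr)$, whence $A_K$ divides $\ve(\alpha_K)$ --- and you call this the easy inclusion. What Le actually proved is the opposite containment: the quantum peripheral ideal $\CP$ (elements of the skein algebra of $\partial X$ that die when pushed into the skein module of $X$) is contained in the recurrence ideal $\CA_K$; whether \emph{every} recurrence arises from the skein module of the complement is an open question going back to Frohman--Gelca--Lofaro's orthogonal ideal, not a theorem. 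Combined with the Bullock/Przytycki--Sikora identification at $t=-1$ and control of denominators (the lemma this paper invokes as \cite[Lemma 2.5]{Tr}), the containment $\CP\subseteq\CA_K$ specializes to $\fp\subseteq\sqrt{\ve(\CA_K)}$, and since $\alpha_K$ generates $\tilde\CA_K$, this yields that $\ve(\alpha_K)$ \emph{divides} $A_K$ up to $M$-factors --- precisely the reverse of your claimed easy direction. Accordingly, the hard open direction is that $A_K$ divides $\ve(\alpha_K)$, i.e.\ exactly the statement that the colored Jones polynomial detects every component of $\chi(X)$ meeting the boundary. Your own write-up is internally inconsistent on this point: you describe the crux as ruling out ``spurious factors'' (over-detection), yet the obstruction you then cite --- that $J_K$ may miss components of $\chi(X)$ --- is the under-detection problem, which on your account would invalidate the inclusion you labeled easy rather than the reverse one. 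The issue you raise about minimal generators not commuting with specialization is real but is handled by the denominator lemma above; the irreducible difficulty is the detection problem.
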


The $A$-polynomial of a knot was introduced by Cooper et al. \cite{CCGLS}; it describes the $SL_2(\BC)$-character variety of the knot complement as viewed from the boundary torus. Here in the definition of the $A$-polynomial, we also allow the factor $L-1$ coming from the abelian component of the character variety of the knot group. Hence the $A$-polynomial in this paper is equal to $L-1$ times the $A$-polynomial defined in \cite{CCGLS}.

The AJ conjecture has been confirmed for the trefoil knot and the figure eight knot (by Garoufalidis \cite{Ga04}), all torus knots (by Hikami \cite{Hi}, Tran \cite{Tr}), some classes of two-bridge knots and pretzel knots including most double twist knots and $(-2,3,6n \pm 1)$-pretzel knots (by Le \cite{Le}, Le and Tran \cite{LT}), the knot $7_4$  (by Garoufalidis and Koutschan \cite{GK}), and most cabled knots over torus knots (by Ruppe and Zhang \cite{RZ}). 

\subsection{Main result} For a finitely generated group $G$, let $\chi(G)$ denote the $SL_2(\BC)$-character variety of $G$, see \cite{CS}. For a manifold $Y$ we use $\chi(Y)$ also to denote $\chi(\pi_1(Y))$. Suppose  $G=\BZ^2$.
Every pair  of generators $\mu, \lambda$ will define an isomorphism
between $\chi(G)$ and $(\BC^*)^2/\tau$, where $(\BC^*)^2$ is the
set of non-zero complex pairs $(M,L)$ and $\tau$ is the involution
$\tau(M,L):=(M^{-1},L^{-1})$. For an algebraic set $V$ (over $\BC$), let $\BC[V]$ denote the ring
of regular functions on $V$.  For example, $\BC[(\BC^*)^2/\tau]=
\ft^\sigma$, the $\sigma$-invariant subspace of  $\ft:=\BC[M^{\pm
1},L^{\pm 1}]$, where $\sigma(M^kL^l):= M^{-k}L^{-l}.$

Let $K$ be a knot in $S^3$ and $X=S^3 \setminus K$ its complement. 
The boundary of
$X$ is a torus whose fundamental group  is free abelian of rank
two. An orientation of $K$ will define a unique pair of an
oriented meridian $\mu$ and an oriented longitude $\lambda$ such that the linking
number between the longitude and the knot is zero. The pair provides
an identification of $\chi(\partial X)$ and $(\BC^*)^2/\tau$
which actually does not depend on the orientation of $K$.

The inclusion $\partial X \hookrightarrow X$ induces an algebra homomorphism
$$\theta: \BC[\chi(\partial X)]  \equiv \ft^\sigma \longrightarrow
\BC[\chi(X)].$$
We call the kernel $\fp$ of $\theta$ the {\em $A$-ideal} of $K$; it is an ideal of $\ft^\sigma$. The $A$-ideal was first introduced in \cite{FGL}; it determines the $A$-polynomial of $K$. In fact $\fp=(A_K \cdot \ft)^{\sigma}$, the $\sigma$-invariant part of the ideal $A_K \cdot \ft \subset \ft$ generated by the $A$-polynomial $A_K$.

The involution $\sigma$ acts on the quantum torus $\CT$ also by $\sigma(M^kL^l)= M^{-k}L^{-l}$. Let $\CA_K^\sigma$ be the $\sigma$-invariant part of the recurrence ideal $\CA_K$; it is an ideal of $\CT^{\sigma}$. 

Sikora \cite{Si} formulated the following conjecture that relates the A-ideal and the colored Jones polynomial of a knot in the 3-pshere.

\begin{conjecture} {\bf (Strong AJ conjecture)}
Suppose $K$ is a knot in $S^3$. Then $$\sqrt{\CA_K^{\sigma} \mid_{t=-1}}=\fp.$$ 
\end{conjecture}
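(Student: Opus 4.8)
The plan is to establish both inclusions $\sqrt{\CA_K^{\sigma}\mid_{t=-1}}\subseteq\fp$ and $\fp\subseteq\sqrt{\CA_K^{\sigma}\mid_{t=-1}}$ inside the commutative ring $\ft^\sigma$, exploiting that at $t=-1$ one has $LM=ML$, so that $\CT\mid_{t=-1}=\ft$ and $\CT^\sigma\mid_{t=-1}=\ft^\sigma$. I will normalize $A_K$ to be squarefree, so that $A_K\cdot\ft$ is radical and $\sigma$-stable (using $A_K(M^{-1},L^{-1})$ equals $A_K(M,L)$ up to a unit), whence $\fp=(A_K\cdot\ft)^\sigma$ is a radical ideal of $\ft^\sigma$. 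The crucial input is the ordinary AJ conjecture, verified for cables of torus knots by Ruppe and Zhang \cite{RZ}: it gives $\alpha_K\mid_{t=-1}=g(M)\,A_K$ for some $g(M)\in\BC[M]$ (up to a unit of $\ft$). I will do the variety analysis first in $\ft$, comparing the zero set of $\CA_K\mid_{t=-1}$ with $V(A_K)$, and then pass to $\sigma$-invariants, checking along the way that the radical and $\sigma$-invariant operations are compatible enough to transfer the conclusion to $\ft^\sigma$.

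For the first inclusion I would invoke the theory of noncommutative $A$-ideals of Frohman, Gelca and Lofaro \cite{FGL} together with \cite{Ga04}: the classical limit of the $q$-holonomic annihilator should lie in the peripheral ideal of the knot group, which in turn maps into the $A$-ideal. Thus every recurrence relation $P\in\CA_K$ specializes at $t=-1$ to an element of $A_K\cdot\ft$, and restricting to $\sigma$-invariant operators gives $\CA_K^{\sigma}\mid_{t=-1}\subseteq(A_K\cdot\ft)^\sigma=\fp$; since $\fp$ is radical, $\sqrt{\CA_K^{\sigma}\mid_{t=-1}}\subseteq\fp$ follows. This direction is structural and uses nothing about the cable.

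The reverse inclusion is the main content, and the obstacle is precisely the extraneous factor $g(M)$, which is exactly what distinguishes the strong AJ conjecture from the polynomial version already handled in \cite{RZ}. Because $\alpha_K\in\CA_K$, the element $g(M)\,A_K=\alpha_K\mid_{t=-1}$ lies in $\CA_K\mid_{t=-1}$, so from the recurrence polynomial alone one only controls the ideal $(g(M)\,A_K)$, whose zero set $V(A_K)\cup V(g(M))$ contains the spurious vertical lines $\{M=c\}$ with $g(c)=0$. To prove $\fp\subseteq\sqrt{\CA_K^{\sigma}\mid_{t=-1}}$ I must produce further elements of $\CA_K$ that remove $V(g(M))$. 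Concretely, I would use the cabling formula expressing $J_K(n)$ through the colored Jones polynomials of the underlying torus knot (which are explicitly known) to write down a second, independent recurrence $Q=\sum_j b_j(M)L^j\in\CA_K$, and show that the content $\gcd_j b_j(M)$ of $Q\mid_{t=-1}$ shares no root with $g(M)$; then no spurious line $\{M=c\}$ is contained in $V(Q\mid_{t=-1})$, and the ideal $(g(M)\,A_K,\,Q\mid_{t=-1})\subseteq\CA_K\mid_{t=-1}$ has radical $(A_K)$ away from the finitely many isolated points on these lines, which a third relation or a dimension count should exclude.

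The hard step is therefore the control of $\CA_K$ beyond its single generator $\alpha_K$: computing a usable secondary recurrence and verifying the coprimality of its content with $g(M)$ will require the detailed cabling formula and the known factorization of the cable $A$-polynomial into the abelian factor, the pullback of the torus-knot factor, and the cabling factor. I expect the hypothesis restricting to \emph{most} cables to enter here as exactly the nondegeneracy condition guaranteeing that $g(M)$ is coprime to the content of the secondary relation, so that the spurious factor is genuinely removed; isolating this condition and verifying it for the admissible cables, together with the bookkeeping needed to pass cleanly between $\ft$ and $\ft^\sigma$, is where I anticipate the real difficulty.
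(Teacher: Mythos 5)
There are two genuine gaps, one in each inclusion. For $\sqrt{\CA_K^{\sigma}\mid_{t=-1}}\subseteq\fp$ you assert that the direction is ``structural'': every $P\in\CA_K$ specializes at $t=-1$ into $A_K\cdot\ft$ because the classical limit of the annihilator ``should lie in the peripheral ideal.'' The known results of Frohman--Gelca--Lofaro and Garoufalidis--Le go the \emph{other} way: the quantum peripheral ideal is contained in the recurrence ideal, so nothing is known a priori about where an arbitrary element of $\CA_K$ lands at $t=-1$. This inclusion is in fact where the AJ conjecture (Ruppe--Zhang) is consumed: the paper's Lemma \ref{AJ} writes any $\delta\in\CA_K$ as $\frac{1}{g(t,M)}\gamma\,\alpha_K$ with $\ve(g)\neq 0$ using the division lemma in $\tilde\CT$ from \cite[Lemma 2.5]{Tr}, specializes to get $\ve(\delta)=hA_K$ with $h\in\BC(M)[L^{\pm1}]$, and then needs the hypothesis that $A_C$ has \emph{no non-trivial $M$-factor} to force $h\in\ft$. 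Your sketch omits all three ingredients, and without the last one the claimed containment can fail.

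For the reverse inclusion your plan --- find a second recurrence whose content is coprime to the spurious factor $g(M)$ --- is the strategy used for the \emph{ordinary} AJ conjecture (Le's method), and it does not by itself yield $\fp\subseteq\sqrt{\ve(\CA_K^{\sigma})}$: even if you show the ideal $\ve(\CA_K)\cdot\ft$ has radical $(A_K)$ in $\ft$, you still must produce $\sigma$-\emph{invariant} annihilators whose images generate $\fp$ up to radical in $\ft^{\sigma}$, and this descent is the actual new content of the strong version, not ``bookkeeping.'' The paper handles it by constructing one explicit $\sigma$-invariant element $SR\in\CA_C^{\sigma}$ with $\ve(SR)=M^kL^l(A_C)^{2m}$ exactly (built from the cabling formula, explicit torus-knot recurrences, and a symmetrization $R=M^{rs}QP\cdots+M^{-rs}QP\cdots$), and then by a short algebraic lemma (Lemma \ref{AJ0}): for $u=vA_C\in\fp$, the relation $\sigma(A_C)=\eta M^aL^bA_C$ forces $w=M^{-a}L^{-b}v^2$ to be $\sigma$-invariant and gives $u^{2m}=\ve(w^mSR)\in\ve(\CA_C^{\sigma})$. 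Your proposal has no analogue of this step, and your worry about ``isolated points'' and ``a third relation or dimension count'' signals that the argument is not closed. Finally, the restriction on $r$ enters only through Ruppe--Zhang's verification of the ordinary AJ conjecture, not through a coprimality condition on contents as you conjecture.
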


Here $\sqrt{\CA_K^{\sigma} \mid_{t=-1}}$ denotes the radical of the ideal $\CA_K^{\sigma} \mid_{t=-1}$ in the ring $\ft^{\sigma}$. 

The strong AJ conjecture was confirmed for the trefoil knot (by Sikora \cite{Si}) and all torus knots (by Tran \cite{Tr}). In this paper we consider the strong AJ conjecture for cabled knots over torus knots. Recall that the set of non-trivial torus knots $T(p,q)$ in $S^3$ can be indexed by pairs of relatively prime integers $(p,q)$ satisfying $|p|>q \ge 2$. Also recall that an $(r,s)$-cabled knot over a knot $K$ in $S^3$ is the knot which can be embedded in the boundary torus of a tubular neighborhood of $K$ in $S^3$ as a curve of slope $r/s$ with respect to the meridian/longitude coordinates of $K$ satisfying $(r,s)=1$ and $s \ge 2$.

Our main result is the following.

\begin{thm}
\label{th2}
The strong AJ conjecture holds true for each $(r,s)$-cabled knot over each $(p,q)$-torus knot if $r$ is not a number between 0 and $pqs$.
\end{thm}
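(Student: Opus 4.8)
The plan is to leverage the two results this theorem sits between: the strong AJ conjecture for the companion torus knot $T := T(p,q)$, proved in \cite{Tr}, and the ordinary polynomial AJ conjecture for the cable $C$, proved in \cite{RZ}. The task is to promote the polynomial-level identity of \cite{RZ} to the ideal-level identity $\sqrt{\CA_C^\sigma\mid_{t=-1}} = \fp$ demanded by the strong conjecture, and for this I need tight control of the honest recurrence ideal $\CA_C \subset \CT$ itself, not merely of its extension $\tilde\CA_C$ to the principal left-ideal domain $\tilde\CT$.

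First I would recall the cabling formula, which expresses $J_C(n)$ as an explicit finite sum of the values of $J_T$ at a range of colors determined by $n$, $s$, and $r$, with coefficients that are monomials in $t$ recording the cabling framing. Feeding the known recurrence for $J_T$ through this formula, together with the periodicity of the cabling sum in $n$, produces an element $\alpha_C \in \CA_C$ whose $L$-order can be read off. The key structural claim here is that $\CA_C$ is principal, generated by this $\alpha_C$. This is where the hypothesis $r \notin (0,pqs)$ is first used: it ensures that $C$ is not itself a torus knot and that the leading and trailing $M$-coefficients of the candidate recurrence do not collapse, so that no recurrence of strictly smaller $L$-order exists and $\CA_C = \CT\cdot\alpha_C$. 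Since the reflection symmetry $J_C(-n) = -J_C(n)$ forces $\CA_C$ to be $\sigma$-invariant, $\alpha_C$ may be normalized to be $\sigma$-invariant up to a monomial, and then $\CA_C^\sigma = \CT^\sigma\cdot\alpha_C$.

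Next I would compute the $A$-polynomial $A_C$ of the cable. Decomposing $S^3\setminus C$ into $S^3\setminus N(T)$ and the Seifert-fibered cable space glued along $\partial N(T)$, the $SL_2(\BC)$-character variety assembles from an abelian piece, a piece recording the holonomy of the cabling annulus, and the image of $\chi(S^3\setminus T)$ under the change of peripheral coordinates induced by the slope $r/s$. Correspondingly $A_C$ factors, up to a polynomial in $M$, as a product of $L-1$, a cabling factor controlled by $r$ relative to $pqs$, and the companion factor $A_T$ with meridian and longitude eigenvalues reparametrized by the slope (the companion eigenvalues becoming monomials in the cable's eigenvalues, with a framing correction of $rs$). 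The $A$-ideal is then $\fp = (A_C\cdot\ft)^\sigma$, a radical ideal of $\ft^\sigma$ since $A_C$ is square-free.

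The final and most delicate step is to pass from the polynomial identity $\alpha_C\mid_{t=-1} \doteq A_C$ of \cite{RZ} to the ideal identity $\sqrt{\CA_C^\sigma\mid_{t=-1}} = \fp$. Having established that $\CA_C^\sigma = \CT^\sigma\alpha_C$ is principal, its specialization $\CA_C^\sigma\mid_{t=-1}$ is generated inside $\ft^\sigma$ by the $\sigma$-symmetrized reduction of $\alpha_C\mid_{t=-1}$, so the problem reduces to showing that the radical of this principal ideal equals $(A_C\ft)^\sigma$. I expect this to be the main obstacle, because specialization at $t=-1$, symmetrization under $\sigma$, and the radical operation need not commute: the cabling sum can introduce squared factors and spurious $M$-only factors (arising from leading coefficients that degenerate at $t=-1$) into $\alpha_C\mid_{t=-1}$ that the radical must strip, and at a locus where the cabling factor and the reparametrized companion factor of $A_C$ meet, the two could a priori produce an embedded or extra component. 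I would therefore argue at the level of varieties, showing $V(\CA_C^\sigma\mid_{t=-1}) = V(\fp)$ componentwise: the reparametrized companion component is controlled by invoking the strong AJ conjecture for $T$ from \cite{Tr}, while the transversality of the cabling locus and the companion locus that rules out a spurious intersection component is exactly what $r\notin(0,pqs)$ secures a second time.
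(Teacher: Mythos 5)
Your high-level framing is right -- the easy inclusion $\sqrt{\ve(\CA_C^{\sigma})}\subset\fp$ should come from the AJ conjecture for $C$ proved in \cite{RZ}, and the content of the theorem is the reverse inclusion -- but both of the mechanisms you propose have genuine gaps.

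First, your argument hinges on the claim that $\CA_C=\CT\cdot\alpha_C$ is principal, deduced from the non-degeneracy of the extreme $M$-coefficients of a candidate recurrence. This is a non sequitur: minimality of the $L$-order of $\alpha_C$ only shows that $\alpha_C$ generates the extension $\tilde\CA_C\subset\tilde\CT$, where denominators in $t,M$ are allowed; it does not show that every element of $\CA_C$ lies in $\CT\cdot\alpha_C$, and $\CT$ is not a principal left-ideal domain. The paper deliberately avoids this claim. For the easy inclusion it uses only the weaker ``almost principal'' property from \cite[Lemma 2.5]{Tr} -- every $\delta\in\CA_C$ equals $\frac{1}{g(t,M)}\gamma\alpha_C$ with $\ve(g)\neq 0$ -- combined with the observation that $A_C$ has no non-trivial $M$-factors, which forces $\ve(\delta)\in A_C\cdot\ft$ (Lemma \ref{AJ}). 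Without that substitute, your deduction that $\CA_C^{\sigma}\mid_{t=-1}$ is generated by the symmetrized reduction of $\alpha_C\mid_{t=-1}$ is unsupported; and even granting principality of $\CA_C$ over $\CT$, the $\sigma$-invariant part of a principal left ideal need not be principal over $\CT^{\sigma}$.

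Second, for the hard inclusion $\fp\subset\sqrt{\ve(\CA_C^{\sigma})}$ your proposal to ``argue at the level of varieties, componentwise'' and to ``invoke the strong AJ conjecture for $T$'' supplies no mechanism for producing elements of $\CA_C^{\sigma}$. Knowing $\sqrt{\CA_T^{\sigma}\mid_{t=-1}}=\fp_T$ says nothing directly about which operators annihilate $J_C$: the cabling formula \eqref{cjp-cable} relates the functions $J_C$ and $J_T$, not their annihilator ideals, and the passage produces remainder terms in $\CR[M^{\pm1}]$ that must be killed before one has an honest element of $\CA_C$. What is actually needed -- and what the paper does in Sections \ref{q>2}--\ref{s=2} -- is the explicit construction of a single $\sigma$-invariant operator $SR\in\CA_C^{\sigma}$ with $\ve(SR)=M^kL^l(A_C)^{2m}$, built from the torus-knot recurrences \eqref{lem1.1}, \eqref{lem1.2}, an auxiliary operator $Q$ annihilating the $M$-polynomial remainders (Lemma \ref{tM}), and the symmetrization $R=M^{rs}QPM^{rs}(L^2-t^{-4rs}M^{-2rs})+M^{-rs}QPM^{-rs}(L^{-2}-t^{-4rs}M^{2rs})$; Lemma \ref{AJ0} then converts this one element into the inclusion of ideals. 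Your sketch gestures at ``feeding the recurrence for $J_T$ through the cabling formula'' but never confronts the remainder terms, the need for $\sigma$-invariance of the resulting operator, or the need for the specialization to be exactly a monomial times an even power of $A_C$ -- and these are where all the work lies.
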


\subsection{Plan of the paper} In Section \ref{cjp} we prove some properties of the colored Jones polynomial of cabled knots. In Section \ref{general} we prove a result about the relationship between the AJ conjecture and the strong AJ conjecture, and consequently prove one part of Theorem \ref{th2}. The proof of the other part of Theorem \ref{th2} is given in Sections \ref{q>2}--\ref{s=2}.

\subsection{Acknowledgment} We would like to thank X. Zhang for helpful discussions.

\section{The Colored Jones polynomial of cabled knots}

\label{cjp}

By \cite{Mo}, the formula for the colored Jones polynomial of the $(r,s)$-cabled knot $K^{(r,s)}$ over a knot $K$ is given by
\begin{equation}
\label{cjp-cable}
J_{K^{(r,s)}}(n)= t^{-rs(n^2-1)} \sum_{j=-\frac{n-1}{2}}^{\frac{n-1}{2}} t^{4rj(js+1)} J_K(2sj+1).
\end{equation}

\begin{lemma}
\label{J_C}
One has
\begin{eqnarray*}
&& J_{K^{(r,s)}}(n+2) - t^{-4rs(n+1)} J_{K^{(r,s)}}(n) \\
&=& t^{-2rs(n+1)} \left( t^{2r(n+1)}J_{K}(s(n+1)+1) - t^{-2r(n+1)}J_{K}(s(n+1)-1) \right).
\end{eqnarray*}
\end{lemma}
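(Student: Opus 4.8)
The plan is to substitute the cabling formula \eqref{cjp-cable} into both terms on the left-hand side and carry out a direct computation; the identity will reveal itself as a telescoping of the defining sum, followed by a single application of the symmetry $J_K(-m)=-J_K(m)$. First I would write
\[
J_{K^{(r,s)}}(n+2)= t^{-rs((n+2)^2-1)} \sum_{j=-\frac{n+1}{2}}^{\frac{n+1}{2}} t^{4rj(js+1)} J_K(2sj+1),
\]
noting $(n+2)^2-1=n^2+4n+3$, and
\[
t^{-4rs(n+1)}J_{K^{(r,s)}}(n) = t^{-rs(n^2+4n+3)} \sum_{j=-\frac{n-1}{2}}^{\frac{n-1}{2}} t^{4rj(js+1)} J_K(2sj+1),
\]
since $4rs(n+1)+rs(n^2-1)=rs(n^2+4n+3)$. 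The crucial observation is that, once the factor $t^{-4rs(n+1)}$ is absorbed, the two overall prefactors coincide, both equal to $t^{-rs(n^2+4n+3)}$.

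Next I would subtract the two expressions. The two summation ranges differ only at their endpoints: the index set for $J_{K^{(r,s)}}(n)$ is exactly that for $J_{K^{(r,s)}}(n+2)$ with the two extreme values $j=\pm\frac{n+1}{2}$ removed. Hence the difference of the sums collapses to just those two terms, giving
\[
t^{-rs(n^2+4n+3)} \left( t^{4r\cdot\frac{n+1}{2}\left(\frac{n+1}{2}s+1\right)} J_K(s(n+1)+1) + t^{4r\cdot\left(-\frac{n+1}{2}\right)\left(-\frac{n+1}{2}s+1\right)} J_K(1-s(n+1)) \right).
\]

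Then I would simplify: the two exponents reduce to $rs(n+1)^2+2r(n+1)$ and $rs(n+1)^2-2r(n+1)$ respectively, and the symmetry $J_K(1-s(n+1))=-J_K(s(n+1)-1)$ converts the sum into a difference. Factoring out $t^{rs(n+1)^2}$ and combining with the prefactor yields the total exponent $-rs(n^2+4n+3)+rs(n+1)^2=-2rs(n+1)$, which reproduces the right-hand side exactly. The computation is essentially routine; the only points demanding care are the bookkeeping of the index $j$ (which runs over integers or half-integers in unit steps according to the parity of $n$, so that precisely the two extreme terms survive the subtraction) and the correct sign coming from the symmetry relation.
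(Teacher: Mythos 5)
Your proposal is correct and follows essentially the same route as the paper's own proof: substitute the cabling formula into both terms, observe that the prefactors match so the difference telescopes to the two extreme summands $j=\pm\frac{n+1}{2}$, and finish with the symmetry $J_K(1-s(n+1))=-J_K(s(n+1)-1)$. The exponent bookkeeping you carry out is accurate, so nothing further is needed.
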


\begin{proof}
From Eq. \eqref{cjp-cable} we have
\begin{eqnarray*}
&& J_{K^{(r,s)}}(n+2) - t^{-4rs(n+1)} J_{K^{(r,s)}}(n) \\
&=& t^{-rs((n+2)^2-1)} \sum_{j=-\frac{n+1}{2}}^{\frac{n+1}{2}} t^{4rj(sj+1)} J_K(2sj+1)  - t^{-4rs(n+1)} t^{-rs(n^2-1)} \sum_{j=-\frac{n-1}{2}}^{\frac{n-1}{2}} t^{4rj(js+1)} J_K(2sj+1) \\
&=&t^{-rs((n+2)^2-1)}  \left( t^{r(n+1)(s(n+1)+2)}J_K(s(n+1)+1)+t^{r(n+1)(s(n+1)-2)} J_K(-s(n+1)+1) \right).
\end{eqnarray*}
The lemma follows since $J_K(-s(n+1)+1)=-J_{K}(s(n+1)-1)$.
\end{proof}

\begin{lemma}
\label{J_Cs_2}
One has
$$J_{K^{(r,2)}}(n+1) + t^{-2r(2n+1)}J_{K^{(r,2)}}(n) =  t^{-2rn}J_K(2n+1).$$
\end{lemma}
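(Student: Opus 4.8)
The plan is to proceed in the same spirit as the proof of Lemma \ref{J_C}: substitute $s=2$ into Eq.\ \eqref{cjp-cable}, combine the two colored Jones polynomials of the cable appearing on the left-hand side into a single sum, and then collapse that sum. The new ingredient, which replaces the direct telescoping used for the shift-by-two relation, will be the antisymmetry $J_K(-m)=-J_K(m)$.

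First I would expand both terms on the left-hand side using \eqref{cjp-cable} with $s=2$, writing $J_{K^{(r,2)}}(n+1)=t^{-2r((n+1)^2-1)}\sum_{j=-n/2}^{n/2} t^{4rj(2j+1)}J_K(4j+1)$ and similarly for $J_{K^{(r,2)}}(n)$ with the sum running over $j$ from $-(n-1)/2$ to $(n-1)/2$. Using the elementary identity $(n+1)^2-1=(n^2-1)+(2n+1)$, the factor $t^{-2r(2n+1)}$ multiplying $J_{K^{(r,2)}}(n)$ is precisely what is needed to give both terms the common prefactor $t^{-2r((n+1)^2-1)}$. Thus the left-hand side equals $t^{-2r((n+1)^2-1)}$ times the bracketed sum $\sum_{j=-n/2}^{n/2}t^{4rj(2j+1)}J_K(4j+1)+\sum_{j=-(n-1)/2}^{(n-1)/2}t^{4rj(2j+1)}J_K(4j+1)$.

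The key step is to reindex the second sum by the substitution $j\mapsto -j-\tfrac12$. One checks directly that the quadratic exponent $j(2j+1)$ is invariant under this substitution, while $4j+1\mapsto -(4j+1)$, so that $J_K(4j+1)\mapsto -J_K(4j+1)$; hence the second sum becomes $-\sum_{j=-n/2}^{\,n/2-1}t^{4rj(2j+1)}J_K(4j+1)$, that is, the negative of the first sum with its top term $j=n/2$ deleted. The bracket therefore collapses to that single top term, $t^{4r(n/2)(n+1)}J_K(2n+1)=t^{2rn(n+1)}J_K(2n+1)$, and multiplying by the prefactor gives $t^{-2r((n+1)^2-1)+2rn(n+1)}J_K(2n+1)=t^{-2rn}J_K(2n+1)$, as claimed.

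The only point requiring care is the bookkeeping of the half-integer indices: when $n$ is even the summation variable runs over half-integers and when $n$ is odd over integers, and the substitution $j\mapsto -j-\tfrac12$ interchanges these two parities, which is exactly why it carries the index set $\{-(n-1)/2,\dots,(n-1)/2\}$ of the second sum onto $\{-n/2,\dots,n/2-1\}$. I expect verifying that this map is a bijection between the two index sets (in particular that the endpoints land correctly) to be the main thing to get right; the exponent invariance and the sign change are then routine computations.
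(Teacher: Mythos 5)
Your proposal is correct and is essentially the paper's own argument: the paper likewise applies the substitution $k=-(j+\tfrac12)$ together with $J_K(-m)=-J_K(m)$, merely applying it to the sum for $J_{K^{(r,2)}}(n+1)$ and reading off $-t^{-2r(2n+1)}J_{K^{(r,2)}}(n)$ plus the extra boundary term, rather than reindexing the second sum as you do. The bookkeeping of the half-integer index sets and the collapse to the single term $t^{2rn(n+1)}J_K(2n+1)$ all check out.
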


\begin{proof}
By Eq. \eqref{cjp-cable} we have
\begin{eqnarray*}
J_{K^{(r,2)}}(n+1)=t^{-2r((n+1)^2-1)} \sum_{k=-\frac{n}{2}}^{\frac{n}{2}} t^{4rk(2k+1)} J_K(4k+1).
\end{eqnarray*}
Set $k=-(j+\frac{1}{2}).$ Then
\begin{eqnarray*}
J_{K^{(r,2)}}(n+1) &=& t^{-2r((n+1)^2-1)} \sum_{j=\frac{n-1}{2}}^{-\frac{n+1}{2}} t^{4rj(2j+1)} J_K(-(4j+1))\\
      &=& t^{-2r((n+1)^2-1)} \Big( -\sum_{j=-\frac{n-1}{2}}^{\frac{n-1}{2}} t^{4rj(2j+1)} J_K(4j+1)+t^{2rn(n+1)} J_K(2n+1) \Big)\\
      &=&-t^{-2r(2n+1)} J_{K^{(r,2)}}(n)+t^{-2rn} J_K(2n+1).
\end{eqnarray*}
This proves Lemma \ref{J_Cs_2}.
\end{proof}

\begin{remark}
The proofs of Lemmas \ref{J_C} and \ref{J_Cs_2} are similar to those of Lemmas 1.1 and 1.5 in \cite{Tr}. See also Propositions 3 and  5 in \cite{Hi}, and Lemma 3.1 and Section 6.1 in \cite{RZ}.
\end{remark}

\section{On the strong AJ conjecture}

\label{general}

Let $\ve$ be the map reducing $t=-1$.

\begin{lemma}
\label{AJ}
Suppose the AJ conjecture holds true for a knot $K$ and the A-polynomial $A_K$ does not have any non-trivial $M$-factors. Then $\sqrt{\ve(\CA_K^{\sigma})} \subset \fp$.
\end{lemma}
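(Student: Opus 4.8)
The plan is to unpack the definitions and show that every element of $\ve(\CA_K^\sigma)$, hence every element of its radical, lies in $\fp = (A_K \cdot \ft)^\sigma$. Recall that $\fp$ is the kernel of the map $\theta: \ft^\sigma \to \BC[\chi(X)]$, so concretely an element $f \in \ft^\sigma$ lies in $\fp$ precisely when $f$ vanishes on the image of $\chi(X)$ in $\chi(\partial X) = (\BC^*)^2/\tau$, i.e.\ on the character variety as seen from the boundary. Since $\fp = (A_K \cdot \ft)^\sigma$ and $\fp$ is a radical ideal (it is a kernel of a map to a reduced ring $\BC[\chi(X)]$, or one uses that the $A$-polynomial cuts out the relevant curve), it suffices to prove the inclusion $\ve(\CA_K^\sigma) \subset \fp$; the radical then comes for free because $\sqrt{\ve(\CA_K^\sigma)} \subset \sqrt{\fp} = \fp$.

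The key step is therefore to show $\ve(\CA_K^\sigma) \subset \fp$. First I would take an arbitrary element $P \in \CA_K^\sigma$, so $P$ is a recurrence relation annihilating $J_K$ that is $\sigma$-invariant, living in $\CT^\sigma$. Setting $t=-1$ collapses the quantum torus $\CT$ to the commutative ring $\ft = \BC[M^{\pm1}, L^{\pm1}]$, since the relation $LM = t^2 ML$ becomes $LM = ML$; the $\sigma$-invariance is preserved, so $\ve(P) \in \ft^\sigma$. Now by the AJ conjecture (assumed for $K$), the recurrence polynomial satisfies $\ve(\alpha_K) \doteq A_K$ up to a factor depending only on $M$; the extra hypothesis that $A_K$ has no non-trivial $M$-factors is what lets me conclude that $\ve(\alpha_K)$ is divisible by $A_K$ up to an $M$-factor, and more usefully that every recurrence relation, after setting $t=-1$, becomes a multiple of $A_K$ in $\ft$. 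The mechanism is that $\alpha_K$ generates the extended ideal $\tilde\CA_K$, so any $P \in \CA_K$ can be written (after clearing denominators in $\CR(M)$) as a left multiple of $\alpha_K$; reducing $t=-1$ turns this into ordinary divisibility $\ve(P) = c(M,L)\,\ve(\alpha_K)$, and the $M$-factor-freeness of $A_K$ ensures $A_K \mid \ve(P)$ in $\ft$. Hence $\ve(P) \in A_K \cdot \ft$, and combined with $\ve(P) \in \ft^\sigma$ this gives $\ve(P) \in (A_K \cdot \ft)^\sigma = \fp$.

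The main obstacle I expect lies in the passage from ``$P$ is annihilated and lies in $\CA_K$'' to ``$\ve(P)$ is divisible by $A_K$ in the commutative ring $\ft$.'' The subtlety is that $\alpha_K$ generates the \emph{extended} ideal $\tilde\CA_K$ over $\tilde\CT$, where one has inverted all polynomials in $t$ and $M$; so writing $P$ as a multiple of $\alpha_K$ a priori only holds after multiplying by some $b(t,M) \in \BC[t^{\pm1}, M]$, giving $b(t,M)\, P = Q \cdot \alpha_K$ in $\CT$ for some $Q$. Reducing $t=-1$ yields $\ve(b)\,\ve(P) = \ve(Q)\,\ve(\alpha_K)$ in $\ft$, and I must argue that the factor $\ve(b)$, which depends only on $M$, does not obstruct divisibility by $A_K$. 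This is exactly where the hypothesis that $A_K$ has no non-trivial $M$-factors is essential: since $A_K$ is coprime to any polynomial in $M$ alone, $A_K$ must divide $\ve(P)$ itself. I would need to handle carefully the case where $\ve(\alpha_K)$ might pick up an extraneous $M$-factor relative to $A_K$, and confirm that this extra factor, being a function of $M$ only, is harmless for the membership in $\fp$ because $\fp$ being generated by $A_K$ already absorbs multiplication by arbitrary elements of $\ft^\sigma$. The $\sigma$-invariance bookkeeping—checking that taking the $\sigma$-invariant part commutes appropriately with the reduction $t=-1$ and with the divisibility argument—is routine but must be done explicitly to land inside $(A_K\cdot\ft)^\sigma$ rather than merely in $A_K \cdot \ft$.
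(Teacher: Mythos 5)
Your overall strategy coincides with the paper's: use the AJ hypothesis to write $\ve(\alpha_K)=f(M)A_K$, express an arbitrary element of $\CA_K$ as a multiple of $\alpha_K$ after clearing denominators, reduce at $t=-1$, use the no-$M$-factor hypothesis to force divisibility by $A_K$ in $\ft$, and finish with $\sqrt{\fp}=\fp$. However, there is a genuine gap at exactly the step you flagged as the main obstacle. After clearing denominators you have $b(t,M)\,P=Q\,\alpha_K$ in $\CT$, and you reduce to $\ve(b)\,\ve(P)=\ve(Q)\,\ve(\alpha_K)$. Your coprimality argument (``$A_K$ is coprime to any polynomial in $M$ alone, hence $A_K\mid\ve(P)$'') silently assumes $\ve(b)\neq 0$. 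Nothing in your argument rules out that $b(t,M)$ is divisible by $t+1$; in that case the reduced equation reads $0=\ve(Q)\,\ve(\alpha_K)$, which only tells you $\ve(Q)=0$ and gives no information whatsoever about $\ve(P)$. The hypothesis on $M$-factors cannot rescue this: $A_K$ divides $0$ trivially, and divisibility of $\ve(P)$ does not follow. This is not a pedantic worry --- it is precisely the point at which the paper invokes a nontrivial external result, \cite[Lemma 2.5]{Tr}, which asserts that every $\delta\in\CA_K$ can be written as $\delta=\frac{1}{g(t,M)}\,\gamma\,\alpha_K$ with $\gamma\in\CT$ and, crucially, $\ve(g)\neq 0$.

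To close the gap you would need to prove (or cite) that statement. The standard argument runs as follows: write $b=(t+1)^k b'$ with $\ve(b')\neq 0$; if $k\geq 1$, reducing $b\,P=Q\,\alpha_K$ at $t=-1$ gives $\ve(Q)\,\ve(\alpha_K)=0$; since $\ft$ is a domain and $\ve(\alpha_K)\neq 0$ (this follows from the coprimality of the coefficients $\alpha_{K,j}$, or directly from the AJ hypothesis since $f(M)A_K\neq0$), one gets $\ve(Q)=0$, i.e. $Q=(t+1)Q_1$ for some $Q_1\in\CT$; because $t+1$ is central and $\CT$ has no zero divisors, one may cancel $t+1$ from both sides and induct on $k$ until the denominator no longer vanishes at $t=-1$. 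With this repaired (and with the denominators of $f(M)$ handled by the coprimality argument, which your write-up does carry out correctly), your proof matches the paper's; the $\sigma$-invariance bookkeeping and the step $\sqrt{\ve(\CA_K^\sigma)}\subset\sqrt{\fp}=\fp$ are fine as you state them.
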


\begin{proof}
Since the AJ conjecture holds true for $K$, we have $\ve(\alpha_K)=f(M)A_K$ for some non-zero $f(M) \in \BC(M)$. For every $\delta \in \CA_K$, by \cite[Lemma 2.5]{Tr}, there exist $g(t,M) \in \BC[t^{\pm 1},M]$ and $\gamma \in \CT$ such that $\delta=\frac{1}{g(t,M)}\,\gamma\,\alpha_K$ and $\varepsilon (g) \not= 0$. It follows that
$$
\varepsilon(\delta) = \frac{1}{\varepsilon (g(M))}\, \varepsilon(\gamma)\, \varepsilon(\alpha_K)=\frac{1}{\varepsilon (g(M))}\, \varepsilon(\gamma) \, f(M)A_K.$$

Let $h=\frac{f(M)}{\varepsilon (g(M))}\, \varepsilon(\gamma) \in \BC(M)[L^{\pm 1}]$. Then $\ve(\delta)=h A_K$. Since $A_K$ does not have any non-trivial $M$-factors, we must have $h \in \BC[M^{\pm 1}, L^{\pm 1}]=\ft$. Hence $\varepsilon(\delta) \in A_K \cdot \ft$, the ideal of $\ft$ generated by $A_K$. It follows that $\varepsilon(\CA_K) \subset A_K \cdot \ft$ and thus  $\varepsilon(\CA_K^{\sigma}) \subset (A_K \cdot \ft)^{\sigma}=\fp$. Hence $\sqrt{\varepsilon(\CA_K^{\sigma})} \subset \sqrt{\fp}=\fp$. 
\end{proof}

\begin{lemma}
\label{AJ0}
Suppose there exists $P \in \CA_K^{\sigma}$ such that $\ve(P)=M^kL^l(A_K)^{2m}$ for some integers $k,l,m$. Then $\fp \subset \sqrt{\ve(\CA_K^{\sigma})}$.
\end{lemma}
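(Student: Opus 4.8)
The plan is to show that every element of $\fp$ has a power lying in $\ve(\CA_K^{\sigma})$. So I would fix an arbitrary $x \in \fp = (A_K\cdot\ft)^{\sigma}$ and write $x = A_K\,y$ with $y \in \ft$, keeping in mind the extra constraint that $x$ is $\sigma$-invariant, i.e. $\sigma(x)=x$. The hypothesis supplies a single element $P \in \CA_K^{\sigma}$ whose reduction $\ve(P)=M^kL^l(A_K)^{2m}$ sits inside $\ve(\CA_K^{\sigma})$; since $\ve$ commutes with $\sigma$ and $P$ is $\sigma$-invariant, $\ve(P)$ is itself $\sigma$-invariant. The goal is then to manufacture a suitable power of $x$ as an $\ft^{\sigma}$-multiple of $\ve(P)$.

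The key computation I would carry out is the following. Working in the commutative ring $\ft$, raise $x=A_Ky$ to the power $2m$ to get $x^{2m}=(A_K)^{2m}y^{2m}$. Applying $\sigma$ and using $\sigma(x)=x$ gives the companion identity $x^{2m}=\sigma(A_K)^{2m}\sigma(y)^{2m}$. Multiplying the two expressions yields
$$x^{4m} = (A_K)^{2m}\,\sigma(A_K)^{2m}\,\bigl(y\,\sigma(y)\bigr)^{2m}.$$
Now I would exploit the $\sigma$-invariance of $\ve(P)$: from $\ve(P)=M^kL^l(A_K)^{2m}$ one reads off $(A_K)^{2m}=M^{-k}L^{-l}\ve(P)$, while applying $\sigma$ to the same identity and using $\sigma(\ve(P))=\ve(P)$ gives $\sigma(A_K)^{2m}=M^kL^l\ve(P)$. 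The monomial factors $M^{-k}L^{-l}$ and $M^kL^l$ are mutually inverse, so their product collapses and $(A_K)^{2m}\sigma(A_K)^{2m}=\ve(P)^2$. Hence $x^{4m}=\ve(P)^2\,(y\,\sigma(y))^{2m}$.

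To finish I would observe that $y\,\sigma(y)$ is $\sigma$-invariant, so $(y\,\sigma(y))^{2m}$ and therefore $\ve(P)\,(y\,\sigma(y))^{2m}$ both lie in $\ft^{\sigma}$. Writing $x^{4m}=\ve(P)\cdot\bigl[\ve(P)(y\,\sigma(y))^{2m}\bigr]$ and using that $\ve(\CA_K^{\sigma})$ is an ideal of $\ft^{\sigma}$ containing $\ve(P)$, I conclude $x^{4m}\in\ve(\CA_K^{\sigma})$, whence $x\in\sqrt{\ve(\CA_K^{\sigma})}$. Since $x\in\fp$ was arbitrary, this gives $\fp\subset\sqrt{\ve(\CA_K^{\sigma})}$. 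The one point that needs care — and the reason for passing to the fourth power rather than the naive $2m$-th power — is that $\ve(\CA_K^{\sigma})$ is an ideal only over the symmetric ring $\ft^{\sigma}$, so every cofactor I multiply $\ve(P)$ by must itself be $\sigma$-invariant; symmetrizing $x^{2m}$ against $\sigma(x^{2m})=x^{2m}$ is exactly what forces both the leftover polynomial factor $(y\,\sigma(y))^{2m}$ and the collapsing monomial factor to be $\sigma$-invariant.
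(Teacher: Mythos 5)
Your proof is correct, and it takes a mildly but genuinely different route from the paper's. The paper invokes the symmetry of the $A$-polynomial from \cite{CCGLS}, namely $A_K(M^{-1},L^{-1})=\eta M^aL^bA_K(M,L)$ with $\eta=\pm1$; comparing this with the $\sigma$-invariance of $\ve(P)$ forces $k=am$, $l=bm$, and comparing it with $\sigma(u)=u$ for $u=vA_K$ forces $v=\eta M^aL^b\sigma(v)$, so that $w:=M^{-a}L^{-b}v^2$ is $\sigma$-invariant and $u^{2m}=\ve(w^mP)\in\ve(\CA_K^{\sigma})$. You bypass that external input entirely: by pairing $x^{2m}=(A_K)^{2m}y^{2m}$ with its image under $\sigma$ you get $x^{4m}=\ve(P)^2\,(y\,\sigma(y))^{2m}$, where the cofactor is $\sigma$-invariant by construction and the monomials $M^{\pm k}L^{\pm l}$ cancel automatically. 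The trade-offs are minor: the paper's argument yields the sharper exponent $2m$ and the structural fact $k=am$, $l=bm$, while yours is more self-contained (no appeal to the transformation law of $A_K$) at the cost of passing to the fourth power, which is harmless for radical membership. Your identification of the one delicate point --- that $\ve(\CA_K^{\sigma})$ is only an ideal of $\ft^{\sigma}$, so every cofactor multiplied against $\ve(P)$ must itself be $\sigma$-invariant --- is exactly the issue both proofs are engineered around.
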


\begin{proof}
It is known that $A_K(M^{\pm 1},L^{\pm 1})=\eta M^aL^b A_K(M,L)$ where $a,b \in \BZ$ and $\eta =\pm 1$, see \cite{CCGLS}. Since $\sigma(P)=P$, we have
$$M^kL^l(A_K)^{2m}=M^{-k}L^{-l}(\sigma(A_K))^{2m}=M^{-k}L^{-l}(\eta M^aL^b A_K)^{2m}.$$
It follows that $k=am$ and $l=bm$. Hence $\ve(P)=M^{am}L^{bm}(A_K)^{2m}$.

If $u \in \fp$ then $u=vA_K$ for some $v \in \BC[M^{\pm 1}, L^{\pm 1}]$. Since $u=\sigma(u)$, we have $vA_k=\sigma(v) \sigma(A_K)=\sigma(v) \eta M^aL^b A_K$ which means that $v=\eta M^aL^b\sigma(v)$. Hence $M^{-a}L^{-b}v^2=\sigma (M^{-a}L^{-b}v^2).$ Let $w=M^{-a}L^{-b}v^2$. Then $\sigma(w)=w$.
We have 
$$u^{2m}=v^{2m}(A_K)^{2m}=\ve((M^{-a}L^{-b}v^2)^mP)=\ve(w^mP) \in \ve(\CA_K^{\sigma}),$$
which implies that $u \in \sqrt{\ve(\CA_K^{\sigma})}$. This proves Lemma \ref{AJ0}.
\end{proof}

Let $T=T(p,q)$ be the $(p,q)$-torus knot and $C=T^{(r,s)}$ be the $(r,s)$-cabled knot over $T$. By \cite{RZ}, the formula for the A-polynomial of $C$ is given by 
$$A_C(M,L)=
\begin{cases} (L-1)(L^2-M^{-2pqs^2})(L^2-M^{-2rs}) &\mbox{if $s>2$ is odd and $q>2$}, \\ 
(L-1)(L+M^{-2ps^2})(L^2-M^{-2rs}) & \mbox{if $s>2$ is odd and $q>2$}, \\ 
(L-1)(L-M^{-pqs^2})(L^2-M^{-2rs}) & \mbox{if $s>2$ is even}, \\ 
(L-1)(L-M^{-4pq})(L+M^{-2r}) & \mbox{if $s=2$}.
\end{cases}$$

Suppose $r$ is not a number between 0 and $pqs$. Ruppe and Zhang \cite{RZ} have confirmed the AJ conjecture for the cabled knot $C$. Since the A-polynomial of $C$ does not have any non-trivial $M$-factors, Lemma \ref{AJ} implies that $\sqrt{\ve(\CA_C^{\sigma})} \subset \fp$. Hence to prove the strong AJ conjecture for $C$, we only need to show that 
\begin{equation}
\label{done}
\fp \subset \sqrt{\ve(\CA_C^{\sigma})}.
\end{equation} 

To prove \eqref{done} we will apply Lemma \ref{AJ0}. As in \cite{RZ}, the proof of \eqref{done} is divided into the following 4 cases:

(1) $s$ is odd and $q>2$ (Section \ref{q>2}),

(2) $s$ is odd and $q=2$ (Section \ref{q=2}),

(3) $s>2$ is even (Section \ref{s>2}),

(4) $s=2$ (Section \ref{s=2}).\\
Moreover, in the case $s>2$ we let $G_1(n)=t^{2r(n+1)}J_{T}(s(n+1)+1)$ and $G_2(n)=t^{-2r(n+1)}J_{T}(s(n+1)-1).$ Then, by Lemma \ref{J_C}, we have
\begin{equation} 
\label{f_1-f_2}
t^{2rs}M^{rs} (L^2-t^{-4rs}M^{-2rs})J_{C}=G_1-G_2.
\end{equation}

The following lemma will be useful in the proof of \eqref{done}.

\begin{lemma}
\label{tM}
Suppose $h_1(n), \cdots, h_k(n) \in \CR[t^{\pm 2n}]$. There exists $P \in \CR[L^{\pm 1}]$ such that $\sigma(P)=P$, $\ve(P)=(L+L^{-1}-2)^m$ for some $m \ge 1$, and $Ph_i=0$ for all $i=1,\cdots, k$.
\end{lemma}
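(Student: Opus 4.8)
The plan is to recognize that, since the operator $P$ we seek involves only $L$ (with coefficients in $\CR$), the condition $Ph_i=0$ collapses to a finite list of scalar vanishing conditions on $P$ viewed as a Laurent polynomial in $L$ and evaluated at powers of $t$. First I would write each $h_i$ as $h_i(n)=\sum_l c_{i,l}(t)\,t^{2ln}$ with $c_{i,l}\in\CR$ and only finitely many $l$ occurring, and write the sought operator as $P=\sum_j a_j(t)L^j$, so that $(Ph_i)(n)=\sum_l c_{i,l}(t)\,p(t^{2l})\,t^{2ln}$, where $p(L):=\sum_j a_j(t)L^j$ denotes $P$ regarded as a Laurent polynomial in $L$. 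Because the functions $n\mapsto t^{2ln}$ for distinct $l$ are $\CR$-linearly independent (a Vandermonde argument in the distinct monomials $t^{2l}$), we obtain $Ph_i=0$ for all $i$ if and only if $p(t^{2l})=0$ for every exponent $l$ appearing in some $h_i$. Let $S$ denote the resulting finite set of exponents.

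Next I would assemble $P$ from simple palindromic factors, one per required root. For $l\in\BZ$ set
\[
B_l(L):=(L-t^{2l})(L^{-1}-t^{2l})=(1+t^{4l})-t^{2l}(L+L^{-1}).
\]
Three facts are then immediate: $B_l$ is $\sigma$-invariant, being symmetric under $L\leftrightarrow L^{-1}$; it vanishes at $L=t^{2l}$ (and at $L=t^{-2l}$); and, since $t^{2l},t^{4l}\mapsto 1$ at $t=-1$, one has $\ve(B_l)=2-(L+L^{-1})=-(L+L^{-1}-2)$. I would then set
\[
P:=(-1)^{|S|}\prod_{l\in S}B_l(L),
\]
which lies in $\CR[L^{\pm 1}]$, is $\sigma$-invariant as a product of $\sigma$-invariant factors, satisfies $p(t^{2l})=0$ for every $l\in S$ (hence $Ph_i=0$ for all $i$), and reduces under $\ve$ to $(-1)^{|S|}\prod_{l\in S}\bigl(-(L+L^{-1}-2)\bigr)=(L+L^{-1}-2)^{|S|}$, so that $m=|S|$.

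Finally I would dispose of the degenerate case to guarantee $m\ge 1$. If some $h_i$ is nonzero then $|S|\ge 1$ and the construction above already works; if instead every $h_i$ vanishes identically, so that $S=\varnothing$ and the product is empty, I would simply take $P=L+L^{-1}-2=-B_0(L)$, which is $\sigma$-invariant, annihilates every $h_i$ trivially, and has $\ve(P)=L+L^{-1}-2$, giving $m=1$. Equivalently one may always insert the harmless factor $B_0$, whose root $L=t^0=1$ imposes no new constraint.

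The computations involved are entirely routine, so I do not anticipate a serious obstacle; the only real content is isolating the correct palindromic building block $B_l$ and verifying that its $t=-1$ reduction is exactly the quadratic $-(L+L^{-1}-2)$, so that the full product collapses to the required power of $L+L^{-1}-2$ once the overall sign is absorbed.
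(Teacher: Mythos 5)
Your proof is correct and is essentially the paper's own: your building block $(L-t^{2l})(L^{-1}-t^{2l})$ is exactly $-t^{2l}$ times the paper's factor $L+L^{-1}-t^{2l}-t^{-2l}$, and both arguments take $P$ to be a product of such $\sigma$-invariant factors, one per exponent, each annihilating the eigenfunction $t^{2ln}$ of $L$ and each reducing to $\pm(L+L^{-1}-2)$ at $t=-1$. Your only additions are the Vandermonde independence remark (not needed for the implication the lemma actually requires) and the explicit handling of the degenerate case where all $h_i$ vanish, a detail the paper leaves implicit.
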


\begin{proof}

Suppose $h(n) \in \CR[t^{\pm 2n}]$. Write $h(n) = \sum_j \lambda_{j} \, t^{2k_{j}n}$, where  $\lambda_{j} \in \CR$ and $k_{j} \in \BZ$. Since $t^{2k_{j}n}$ is annihilated by $L+L^{-1}-t^{2k_{j}}-t^{-2k_{j}}$, $h(n)$ is annihilated by $$P_h:=\prod_j (L+L^{-1}-t^{2k_{j}}-t^{-2k_{j}}).$$

Let $P=P_{h_1} \cdots P_{h_n}$. Then $P$ satisfies the conditions of Lemma \ref{tM}.
\end{proof}

\section{Case $s>2$ is odd and $q>2$}

\label{q>2}

From \cite[Lemma 1.1]{Tr} we have $J_T(n+2)-t^{-4pq(n+1)}J_T(n) \in \CR[t^{\pm 2n}]$, i.e.
\begin{equation}
\label{lem1.1}
(L^{2}-t^{-4pq}M^{-2pq})J_T \in \CR[M^{\pm 1}].
\end{equation}

\begin{lemma} 
\label{J_T}
For all positive integers $m$, one has
$$(L^{2m}-t^{-4pqm^2}M^{-2pqm})J_T \in \CR[M^{\pm 1}].$$
\end{lemma}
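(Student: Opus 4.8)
The plan is to induct on $m$, the base case $m=1$ being precisely Eq.~\eqref{lem1.1}. Throughout I write $P_m := L^{2m}-t^{-4pqm^2}M^{-2pqm} \in \CT$, so that the assertion to be proved is $P_m J_T \in \CR[M^{\pm 1}]$; and I regard $\CR[M^{\pm 1}]$ as the space of functions $\CR[t^{\pm 2n}]$ on which $M$ acts by multiplication by $t^{2n}$ and $L$ by the shift $n \mapsto n+1$.

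The key step is a one-step recursion relating $P_{m+1}$ to $P_m$ and $P_1$ inside the quantum torus. Using $LM=t^2ML$, hence $L^kM^l=t^{2kl}M^lL^k$, I would first compute
\[
L^2 P_m = L^{2(m+1)} - t^{-4pqm^2-8pqm}M^{-2pqm}L^2,
\]
and then, since $4pq(m+1)^2 = 4pqm^2+8pqm+4pq$, subtract to obtain the identity
\[
P_{m+1} = L^2 P_m + t^{-4pqm^2-8pqm}M^{-2pqm}P_1 .
\]
One checks the latter by expanding $t^{-4pqm^2-8pqm}M^{-2pqm}(L^2-t^{-4pq}M^{-2pq})$ and adding $L^2P_m$: the two $M^{-2pqm}L^2$ terms cancel, leaving exactly $P_{m+1}$.

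Applying this identity to $J_T$ gives $P_{m+1}J_T = L^2(P_mJ_T) + t^{-4pqm^2-8pqm}M^{-2pqm}(P_1J_T)$. By the inductive hypothesis $P_mJ_T \in \CR[M^{\pm 1}]$, and $P_1J_T \in \CR[M^{\pm 1}]$ by Eq.~\eqref{lem1.1}. The decisive point is that $\CR[M^{\pm 1}]=\CR[t^{\pm 2n}]$ is invariant under the whole $\CT$-action, since $L\cdot t^{2jn}=t^{2j}\,t^{2jn}$ and $M\cdot t^{2jn}=t^{2(j+1)n}$ both remain in $\CR[t^{\pm 2n}]$; in particular $L^2$ and $M^{-2pqm}$ preserve $\CR[M^{\pm 1}]$. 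Hence both summands on the right lie in $\CR[M^{\pm 1}]$, and therefore so does $P_{m+1}J_T$, which closes the induction.

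I do not expect a serious obstacle here: once the recursion is in hand the argument is a routine induction. The only points demanding care are the bookkeeping of $t$-exponents when commuting $L^2$ past $M^{-2pqm}$ (governed by $L^kM^l=t^{2kl}M^lL^k$, together with the expansion of $4pq(m+1)^2$), and the observation that $\CR[M^{\pm 1}]$ is preserved by the operators $L^{\pm1}$ and $M^{\pm1}$, which is exactly what allows me to push $L^2$ and $M^{-2pqm}$ through without leaving the target space.
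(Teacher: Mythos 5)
Your proof is correct: the recursion $P_{m+1}=L^2P_m+t^{-4pqm^2-8pqm}M^{-2pqm}P_1$ checks out, and the observation that $\CR[M^{\pm 1}]\cong\CR[t^{\pm 2n}]$ is preserved by $L^{\pm1}$ and $M^{\pm1}$ is exactly the point that makes the induction close. This is essentially the paper's argument: the paper packages the same iteration by setting $A=L^2M^{2pq}$ so that \eqref{lem1.1} reads $(A-t^{4pq})J_T\in\CR[M^{\pm1}]$ and then invoking the factorization of $A^m-t^{4pqm}$ before unwinding the $t$-exponents, but the underlying idea and bookkeeping are the same as yours.
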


\begin{proof}
From \eqref{lem1.1} we have $(L^{2}M^{2pq}-t^{4pq})J_T \in \CR[M^{\pm 1}].$ It follows that $$((L^{2}M^{2pq})^m-t^{4pqm})J_T \in \CR[ M^{\pm 1}].$$ Since $(L^{2}M^{2pq})^m=t^{-4pqm(m-1)}L^{2m}M^{2pqm}$, we have $(L^{2m}M^{2pqm}-t^{4pqm^2})J_T \in \CR[M^{\pm 1}].$ The lemma  follows since $L^{2m}M^{2pqm}=t^{8pqm^2}M^{2pqm}L^{2m}$.
\end{proof}

\begin{proposition}
\label{P_1P_2}
One has 
\begin{eqnarray*}
(L^2M^{2pqs^2}+L^{-2}M^{-2pqs^2}-t^{4r-4pqs}-t^{-4r+4pqs+8pqs^2}) G_1 &\in& \CR[ M^{\pm 1}],\\
(L^2M^{2pqs^2}+L^{-2}M^{-2pqs^2}-t^{-4r+4pqs}-t^{4r-4pqs+8pqs^2}) G_2 &\in& \CR[M^{\pm 1}].
\end{eqnarray*}
\end{proposition}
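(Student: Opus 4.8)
The plan is to derive everything from Lemma \ref{J_T} with $m=s$, after specializing its argument. The key observation is that applying $L$ to $G_1$ shifts $n \mapsto n+1$, hence advances the argument $s(n+1)+1$ of $J_T$ by $s$; thus $L^{2}$ advances that argument by $2s$, which is exactly the shift governed by Lemma \ref{J_T} with $m=s$. This pins down the right instance to use and suggests that $L^{\pm 2}$ acting on $G_1$ is expressible, modulo $\CR[M^{\pm 1}]$, as a power of $t$ times $M^{\mp 2pqs^2}$ acting on $G_1$.

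Concretely, I would first rewrite Lemma \ref{J_T} with $m=s$ as
\begin{equation*}
J_T(k+2s) = t^{-4pqs^2-4pqsk}\, J_T(k) + R(k), \qquad R(k)\in \CR[t^{\pm 2k}],
\end{equation*}
together with its backward shift $k\mapsto k-2s$ (needed because Lemma \ref{J_T} is stated only for positive $m$, so the $L^{-2}$ direction must be obtained by reindexing the same identity). Substituting $k=s(n+1)+1$ and using $G_1(n)=t^{2r(n+1)}J_T(s(n+1)+1)$ to eliminate $J_T$, a direct computation gives
\begin{equation*}
L^{2} G_1 = t^{4r-8pqs^2-4pqs}\, M^{-2pqs^2} G_1 + \rho_1, \qquad L^{-2} G_1 = t^{-4r+4pqs}\, M^{2pqs^2} G_1 + \rho_2,
\end{equation*}
with $\rho_1,\rho_2\in\CR[M^{\pm 1}]$. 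The remainders land in $\CR[M^{\pm 1}]=\CR[t^{\pm 2n}]$ because $R$ evaluated at $s(n+1)\pm 1$ lies in $\CR[t^{\pm 2sn}]\subseteq\CR[t^{\pm 2n}]$ and the accompanying prefactors are integer powers of $t^{2n}$ times a scalar in $\CR$.

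Next I would compose these two relations with $M^{2pqs^2}$ and $M^{-2pqs^2}$ respectively, reordering via $L^{a}M^{b}=t^{2ab}M^{b}L^{a}$, so that the surviving $t^{\mp 4pqs^2 n}$ factors cancel and only scalars remain. This yields $(L^{2}M^{2pqs^2}-t^{4r-4pqs})G_1\in\CR[M^{\pm 1}]$ and $(L^{-2}M^{-2pqs^2}-t^{-4r+4pqs+8pqs^2})G_1\in\CR[M^{\pm 1}]$; adding them gives the first claim. The statement for $G_2$ is handled identically with $k=s(n+1)-1$ and $G_2(n)=t^{-2r(n+1)}J_T(s(n+1)-1)$, which amounts to replacing $r$ by $-r$ throughout, producing the constants $t^{-4r+4pqs}$ and $t^{4r-4pqs+8pqs^2}$ as required.

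The only real obstacle is exponent bookkeeping: one must carefully track the $t$-powers arising from the reparametrization $k=s(n+1)\pm 1$, from solving for $J_T$ in terms of $G_i$, and from the commutation relation, and verify that they telescope to the stated constants. Conceptually the argument is forced once one notes that $L$ advances the $J_T$-argument by $s$. It is worth recording that the symmetric combination $L^{2}M^{2pqs^2}+L^{-2}M^{-2pqs^2}$ is chosen deliberately: at $t=-1$ it becomes a monomial times the square $(L^{2}-M^{-2pqs^2})^{2}$ of the relevant $A$-polynomial factor, which is the even power needed to apply Lemma \ref{AJ0} later.
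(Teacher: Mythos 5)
Your proposal is correct and follows essentially the same route as the paper: both apply Lemma \ref{J_T} with $m=s$ at the argument $s(n+1)\pm 1$, deduce the intermediate relation $(L^2M^{2pqs^2}-t^{4r-4pqs})G_1\in\CR[M^{\pm 1}]$, and then obtain the symmetric $L^{-2}$ companion (the paper does this by left-multiplying by $1-t^{-4r+4pqs}L^{-2}M^{-2pqs^2}$, which is the same computation as your explicit inversion and addition). The exponent bookkeeping in your displayed relations checks out against the paper's.
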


\begin{proof} From Lemma \ref{J_T} we have 
$$J_{T}(s(n+3)+1)-t^{-4pqs(sn+2s+1)}J_{T}(s(n+1)+1) \in \CR[t^{\pm 2n}].$$
It follows that $G_1(n+2)-t^{4r-4pqs(sn+2s+1)}G_1(n) \in \CR[t^{\pm 2n}]$. Hence $$(L^2-t^{4r-4pqs(2s+1)}M^{-2pqs^2})G_1 \in \CR[M^{\pm 1}],$$
which implies that $(L^2M^{2pqs^2}-t^{4r-4pqs})G_1 \in \CR[M^{\pm 1}]$. Hence
$$(1-t^{-4r+4pqs}L^{-2}M^{-2pqs^2})(L^2M^{2pqs^2}-t^{4r-4pqs}) G_1 \in \CR[M^{\pm 1}].$$
The proof for $G_2$ is similar.
\end{proof}

Let 
\begin{eqnarray*}
P_1 &:=& L^2M^{2pqs^2}+L^{-2}M^{-2pqs^2}-t^{4r-4pqs}-t^{-4r+4pqs+8pqs^2},\\
P_2 &:=& L^2M^{2pqs^2}+L^{-2}M^{-2pqs^2}-t^{-4r+4pqs}-t^{4r-4pqs+8pqs^2}.
\end{eqnarray*}
Note that $\sigma(P_i)=P_i$ and $P_1P_2=P_2P_1$. By Proposition \ref{P_1P_2}, we have $P_iG_i \in \CR[M^{\pm 1}]$ for $i=1,2$. Let $P=P_1P_2$. Then $\sigma(P)=P$ and $PG_i \in \CR[M^{\pm 1}]$.

By Lemma \ref{tM}, we can choose $Q \in \CR[L^{\pm 1}]$ such that $\sigma(Q)=Q$, $\ve(Q)=(L+L^{-1}-2)^m$ for some $m \ge 1$, and $Q(PG_i)=0$ for all $i=1,2$. 

From \eqref{f_1-f_2} we have $QPM^{rs} (L^2-t^{-4rs}M^{-2rs})J_{C}=0$. Let
$$R := M^{rs}QPM^{rs} (L^2-t^{-4rs}M^{-2rs})+M^{-rs}QPM^{-rs} (L^{-2}-t^{-4rs}M^{2rs}).$$
Then $\sigma(R)=R$. Since $\CA_C$ is invariant under $\sigma$ (by \cite{Ga08}), we have $RJ_{C}=0$. Note that
\begin{eqnarray*}
\ve(R) &=& (L^2M^{2rs}+L^{-2}M^{-2rs}-2) \ve(QP)\\
       &=& (L^2M^{2rs}+L^{-2}M^{-2rs}-2)(L^2M^{2pqs^2}+L^{-2}M^{-2pqs^2}-2)^2(L+L^{-1}-2)^m.
\end{eqnarray*}
Let $k:=\max\{m,2\}$ and $S:=(L^2M^{2rs}+L^{-2}M^{-2rs}-2)^{k-1}(L^2M^{2pqs^2}+L^{-2}M^{-2pqs^2}-2)^{k-2}(L+L^{-1}-2)^{k-m}.$ We have $\sigma(S)=S$ and 
\begin{eqnarray*}
\ve(SR) &=& \left( (L^2M^{2rs}+L^{-2}M^{-2rs}-2)(L^2M^{2pqs^2}+L^{-2}M^{-2pqs^2}-2)(L+L^{-1}-2) \right)^k \\
        &=& M^{2(r+pqs)sk} L^{-5k} \left( (L-1)(L^2-M^{-2pqs^2})(L^2-M^{-2rs}) \right)^{2k}.
\end{eqnarray*}

Since $SR \in \CA^{\sigma}_C$ and $\ve(SR)=M^{2(r+pqs)sk} L^{-5k} (A_C)^{2k}$, Lemma \ref{AJ0} implies that $\fp \subset \sqrt{\ve(\CA_C^{\sigma})}$. This proves Theorem \ref{th2} in the case $s>2$ is odd and $q>2$.

\section{Case $s>2$ is odd and $q=2$}

\label{q=2}

From \cite[Lemma 1.5]{Tr} we have $J_T(n+1)+t^{-2p(2n+1)}J_T(n) \in \CR[t^{\pm 2n}]$, i.e.
\begin{equation}
\label{lem1.2}
(L+t^{-2p}M^{-2p})J_T \in \CR[M^{\pm 1}].
\end{equation}

\begin{lemma} 
\label{J_Tq_2}
When $q=2$, for all positive integers $m$, one has
$$(L^m-(-1)^m t^{-2pm^2}M^{-2pm})J_{T} \in \CR[M^{\pm 1}].$$
\end{lemma}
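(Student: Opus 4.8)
The plan is to follow verbatim the strategy used to prove Lemma \ref{J_T}, replacing the degree-two relation \eqref{lem1.1} by the degree-one relation \eqref{lem1.2}, and keeping careful track of the signs that now arise because \eqref{lem1.2} carries a $+$ where \eqref{lem1.1} carried a $-$. First I would convert \eqref{lem1.2} into the shape ``a monomial minus a scalar, applied to $J_T$, lies in $\CR[M^{\pm 1}]$''. Multiplying $(L+t^{-2p}M^{-2p})J_T \in \CR[M^{\pm 1}]$ on the left by $M^{2p}$, using $M^{2p}L=t^{-4p}LM^{2p}$, and rescaling by a power of $t$, one gets $(LM^{2p}+t^{2p})J_T \in \CR[M^{\pm 1}]$. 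Writing $Y:=LM^{2p}$ and $c:=-t^{2p}$, this reads $(Y-c)J_T \in \CR[M^{\pm 1}]$.

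The key step is to raise this relation to the $m$-th power. Using the factorization $Y^m-c^m=(Y^{m-1}+cY^{m-2}+\cdots+c^{m-1})(Y-c)$ together with the fact that $\CR[M^{\pm 1}]$, viewed as the functions lying in $\CR[t^{\pm 2n}]$, is invariant under the action of every element of $\CT$ (since $L$ sends $t^{2kn}$ to $t^{2k}t^{2kn}$ and $M$ sends it to $t^{2(k+1)n}$), I obtain $(Y^m-c^m)J_T \in \CR[M^{\pm 1}]$, that is, $((LM^{2p})^m-(-1)^m t^{2pm})J_T \in \CR[M^{\pm 1}]$.

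Finally I would normalize the monomial. From the relation $LM=t^2ML$ one computes $(LM^{2p})^m=t^{-2pm(m-1)}L^mM^{2pm}$ and $L^mM^{2pm}=t^{4pm^2}M^{2pm}L^m$; substituting these and multiplying on the left by the monomial $t^{-4pm^2}M^{-2pm}$, which again preserves $\CR[M^{\pm 1}]$, telescopes the powers of $t$ down to $-2pm^2$ and yields $(L^m-(-1)^m t^{-2pm^2}M^{-2pm})J_T \in \CR[M^{\pm 1}]$, as claimed.

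The computation is entirely routine; the only place where genuine care is needed—and the sole point of divergence from the proof of Lemma \ref{J_T}—is the sign bookkeeping, since $c=-t^{2p}$ forces $c^m=(-1)^m t^{2pm}$ and hence the factor $(-1)^m$ in the statement, and the successive powers of $t$ produced by the $q$-commutation must be tracked so that they collapse precisely to the exponent $-2pm^2$.
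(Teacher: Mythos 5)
Your proposal is correct and follows essentially the same route as the paper: convert \eqref{lem1.2} to $(LM^{2p}+t^{2p})J_T \in \CR[M^{\pm 1}]$, apply the telescoping factorization $Y^m-c^m=(Y^{m-1}+\cdots+c^{m-1})(Y-c)$ with $Y=LM^{2p}$, $c=-t^{2p}$, and then normalize via $(LM^{2p})^m=t^{-2pm(m-1)}L^mM^{2pm}$ and $L^mM^{2pm}=t^{4pm^2}M^{2pm}L^m$. The only difference is that you spell out the justifications the paper leaves implicit (the invariance of $\CR[t^{\pm 2n}]$ under the $\CT$-action and the exact powers of $t$ in the rescalings), all of which check out.
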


\begin{proof}
From \eqref{lem1.2} we have $(LM^{2p}+t^{2p})J_T \in \CR[M^{\pm 1}].$ It follows that $$((LM^{2p})^m-(-t^{2p})^m)J_T \in \CR[ M^{\pm 1}].$$ Since $(LM^{2p})^m=t^{-2pm(m-1)}L^{m}M^{2pm}$, we have $(L^mM^{2pm}-(-1)^m t^{2pm^2})J_T \in \CR[M^{\pm 1}].$ The lemma follows since $L^mM^{2pm} = t^{4pm^2}M^{2pm}L^m$.
\end{proof}

\begin{proposition}
\label{P_1P_2q_2}
When $q=2$ and $s$ is odd, one has 
\begin{eqnarray*}
(LM^{2ps^2}+L^{-1}M^{-2ps^2}+t^{2r-2ps(s+2)}+t^{-2r+2ps(3s+2)}) G_1 &\in& \CR[ M^{\pm 1}],\\
(LM^{2ps^2}+L^{-1}M^{-2ps^2}+t^{-2r-2ps(s-2)}+t^{2r+2ps(3s-2)}) G_2 &\in& \CR[ M^{\pm 1}].
\end{eqnarray*}
\end{proposition}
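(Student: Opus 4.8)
The plan is to follow verbatim the template of Proposition \ref{P_1P_2}, simply replacing the second-order torus-knot recurrence used there by the first-order recurrence that governs $T(p,2)$. Concretely, I would start from Lemma \ref{J_Tq_2} with $m=s$. Since $s$ is odd we have $(-1)^s=-1$, so this reads
$$(L^s + t^{-2ps^2}M^{-2ps})J_T \in \CR[M^{\pm 1}],$$
that is, $J_T(N+s) + t^{-2ps^2-4psN}J_T(N) \in \CR[t^{\pm 2N}]$. The sign flip here, forced by $s$ being odd, is exactly what will produce the $+$ signs in the statement; it reflects the fact that the relevant $L$-factor of $A_C$ is the linear factor $L+M^{-2ps^2}$ rather than the quadratic factor $L^2-M^{-2pqs^2}$ of the $q>2$ case. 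For this reason I use $m=s$ (a shift of the color by $s$), so that the resulting recurrences for $G_1,G_2$ are \emph{first order} in $L$, matching the degree-one $L$-factor.

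Next I would specialize the color. Setting $N=s(n+1)+1$ relates $J_T(s(n+2)+1)$ to $J_T(s(n+1)+1)$, while setting $N=s(n+1)-1$ relates $J_T(s(n+2)-1)$ to $J_T(s(n+1)-1)$. Because one step in $n$ shifts the $J_T$-argument by exactly $s$, absorbing the prefactors $t^{\pm 2r(n+1)}$ of $G_1,G_2$ turns each relation into a first-order recurrence of the form $(L + t^{c_i}M^{-2ps^2})G_i \in \CR[M^{\pm 1}]$. Multiplying on the left by $M^{2ps^2}$, commuting past $L$ via $M^aL^b=t^{-2ab}L^bM^a$, and clearing the resulting power of $t$ gives
$$(LM^{2ps^2}+t^{2r-2ps(s+2)})G_1 \in \CR[M^{\pm 1}], \qquad (LM^{2ps^2}+t^{-2r-2ps(s-2)})G_2 \in \CR[M^{\pm 1}].$$

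Finally I would symmetrize exactly as in Proposition \ref{P_1P_2}. Writing the constant above as $c_i$, I multiply $(LM^{2ps^2}+c_i)G_i$ on the left by $(1+c_i^{-1}L^{-1}M^{-2ps^2})$. The cross term $L^{-1}M^{-2ps^2}\cdot LM^{2ps^2}=t^{4ps^2}$ contributes the new constant $c_i^{-1}t^{4ps^2}$, and a short exponent check gives $c_1^{-1}t^{4ps^2}=t^{-2r+2ps(3s+2)}$ and $c_2^{-1}t^{4ps^2}=t^{2r+2ps(3s-2)}$, while the coefficient of $L^{-1}M^{-2ps^2}$ comes out equal to $1$. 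This produces precisely the two $\sigma$-invariant operators in the statement, each annihilating the corresponding $G_i$.

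The only genuine obstacle is bookkeeping: correctly tracking the several $t$-exponents through the color substitution, the non-commutative multiplication by $M^{2ps^2}$, and the symmetrization, and being careful to invoke the sign $(-1)^s=-1$ exactly where it matters. There is no new idea beyond Proposition \ref{P_1P_2}; the entire content is that the $T(p,2)$ recurrence is first order and that odd $s$ flips its sign.
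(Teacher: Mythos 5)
Your proposal is correct and follows essentially the same route as the paper: apply Lemma \ref{J_Tq_2} with $m=s$ (using $(-1)^s=-1$ to get the sign flip), specialize the color to $s(n+1)\pm 1$, absorb the $t^{\pm 2r(n+1)}$ prefactors to obtain first-order recurrences $(LM^{2ps^2}+c_i)G_i\in\CR[M^{\pm 1}]$, and symmetrize by left-multiplying with $(1+c_i^{-1}L^{-1}M^{-2ps^2})$. The exponent bookkeeping you outline ($c_1=t^{2r-2ps(s+2)}$, $c_2=t^{-2r-2ps(s-2)}$, and $c_i^{-1}t^{4ps^2}$ giving the remaining constants) checks out and matches the paper's computation.
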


\begin{proof} From Lemma \ref{J_Tq_2} we have 
$$J_{T}(s(n+2)+1)+t^{-2ps(2sn+3s+2)}J_{T}(s(n+1)+1) \in \CR[t^{\pm 2n}].$$
It follows that $G_1(n+1)+t^{2r-2ps(2sn+3s+2)}G_1(n) \in \CR[t^{\pm 2n}]$. Hence $$(L+t^{2r-2ps(3s+2)}M^{-2ps^2})G_1 \in \CR[M^{\pm 1}],$$
which implies that $(LM^{2ps^2}+t^{2r-2ps(s+2)})G_1 \in \CR[M^{\pm 1}]$. Hence
$$(1+t^{-2r+2ps(s+2)}L^{-1}M^{-2ps^2})(LM^{2ps^2}+t^{2r-2ps(s+2)}) G_1 \in \CR[M^{\pm 1}].$$
The proof for $G_2$ is similar.
\end{proof}

By applying Proposition \ref{P_1P_2q_2}, we can show that $\fp \subset \sqrt{\ve(\CA_C^{\sigma})}$ as in Section \ref{q>2}.

\section{Case $s>2$ is even}

\label{s>2}

\begin{proposition}
\label{prop.11}
When $s$ is even, one has 
\begin{eqnarray*}
(LM^{pqs^2}+L^{-1}M^{-pqs^2}-t^{2r-pqs(s+2)}-t^{-2r+pqs(3s+2)}) G_1 &\in& \CR[ M^{\pm 1}],\\
(LM^{pqs^2}+L^{-1}M^{-pqs^2}-t^{-2r-pqs(s-2)}-t^{2r+pqs(3s-2)}) G_2 &\in& \CR[ M^{\pm 1}].
\end{eqnarray*}
\end{proposition}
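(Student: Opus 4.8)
The plan is to mirror the derivations of Propositions \ref{P_1P_2} and \ref{P_1P_2q_2}, exploiting the parity of $s$. The essential new ingredient is that, since $s$ is even, Lemma \ref{J_T} may be applied with $m=s/2$, which is an integer. This produces a recurrence of degree one in $L$ relating the values of $J_T$ at arguments differing by exactly $s$, in contrast to the second-order relation forced in the odd case with $q>2$; note also that Lemma \ref{J_T} is valid for all $q$, so no separate treatment of $q=2$ is needed here.

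Concretely, I would first specialize Lemma \ref{J_T} to $m=s/2$ to obtain $(L^s - t^{-pqs^2} M^{-pqs}) J_T \in \CR[M^{\pm 1}]$. Evaluating this at the argument $s(n+1)+1$ gives $J_T(s(n+2)+1) - t^{-pqs(2sn+3s+2)} J_T(s(n+1)+1) \in \CR[t^{\pm 2n}]$; multiplying through by $t^{2r(n+2)}$ and substituting the definition of $G_1$ turns this into $G_1(n+1) - t^{2r-pqs(2sn+3s+2)} G_1(n) \in \CR[t^{\pm 2n}]$, i.e. $(L - t^{2r-pqs(3s+2)} M^{-pqs^2}) G_1 \in \CR[M^{\pm 1}]$. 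The analogous computation at the argument $s(n+1)-1$, multiplying by $t^{-2r(n+2)}$, yields $(L - t^{-2r-pqs(3s-2)} M^{-pqs^2}) G_2 \in \CR[M^{\pm 1}]$.

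From here the two displayed relations follow by the same two-step normalization used in Section \ref{q=2}. First I would left-multiply each relation by $t^{2pqs^2} M^{pqs^2}$, using $M^{pqs^2} L = t^{-2pqs^2} L M^{pqs^2}$, to clear the negative power of $M$ and normalize the leading term; this gives $(LM^{pqs^2} - t^{2r-pqs(s+2)}) G_1 \in \CR[M^{\pm 1}]$ and $(LM^{pqs^2} - t^{-2r-pqs(s-2)}) G_2 \in \CR[M^{\pm 1}]$. Then I would multiply the $G_i$ relation on the left by $(1 + B_i^{-1} L^{-1} M^{-pqs^2})$, where $B_i$ is the constant term just obtained; since $L^{\pm 1}$ and $M^{\pm 1}$ each preserve $\CR[M^{\pm 1}]$, the result stays in $\CR[M^{\pm 1}]$, and expanding (again via $M^{-pqs^2} L = t^{2pqs^2} L M^{-pqs^2}$) produces exactly the palindromic operators in the statement.

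The computation is routine; the only thing to watch is the bookkeeping of the $t$-exponents as powers of $M$ are commuted past $L$ through $LM = t^2 ML$, which is where the shift $+2pqs^2$ and the final combinations $pqs(s+2)$, $pqs(3s+2)$, $pqs(s-2)$, $pqs(3s-2)$ originate. I do not expect any genuine obstacle beyond this careful accounting: the parity hypothesis on $s$ is precisely what makes $m=s/2$ available, and hence what makes the degree-one relation in $L$, and therefore the stated palindromes, possible.
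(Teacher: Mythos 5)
Your proof is correct and follows essentially the same route as the paper: apply Lemma \ref{J_T} with $m=s/2$ (which is where the parity of $s$ enters), evaluate at $s(n+1)\pm 1$, twist by $t^{\pm 2r(n+2)}$ to get a first-order relation for $G_i$, normalize by $t^{2pqs^2}M^{pqs^2}$, and symmetrize by the factor $(1-B_i^{-1}L^{-1}M^{-pqs^2})$. Your version is in fact slightly more careful — the paper's final displayed symmetrizing factor contains evident typos ($t^{-2r+2pq(s+2)}$ and $M^{-2ps^2}$ in place of $t^{-2r+pqs(s+2)}$ and $M^{-pqs^2}$) which your bookkeeping implicitly corrects.
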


\begin{proof} From Lemma \ref{J_T} we have 
$$J_{T}(s(n+2)+1)-t^{-pqs(2sn+3s+2)}J_{T}(s(n+1)+1) \in \CR[t^{\pm 2n}].$$
It follows that $G_1(n+1)-t^{2r-pqs(2sn+3s+2)}G_1(n) \in \CR[t^{\pm 2n}]$. Hence $$(L-t^{2r-pqs(3s+2)}M^{-pqs^2})G_1 \in \CR[M^{\pm 1}],$$
which implies that $(LM^{pqs^2}-t^{2r-pqs(s+2)})G_1 \in \CR[M^{\pm 1}]$. Hence
$$(1-t^{-2r+2pq(s+2)}L^{-1}M^{-2ps^2})(LM^{pqs^2}-t^{2r-pqs(s+2)}) G_1 \in \CR[M^{\pm 1}].$$
The proof for $G_2$ is similar.
\end{proof}

By applying Proposition \ref{prop.11}, we can show that $\fp \subset \sqrt{\ve(\CA_C^{\sigma})}$ as in Section \ref{q>2}.

\section{Case $s=2$}

\label{s=2}

In the case $s=2$ we let $G(n)=J_K(2n+1)$. Then, by Lemma \ref{J_Cs_2}, we have 
\begin{equation}
\label{f}
M^r(L+t^{-2r}M^{-2r})J_{C} = G.
\end{equation}

\begin{proposition}
\label{ff}
One has
$$(LM^{4pq}+L^{-1}M^{-4pq}-1-t^{8pq})G \in  \CR[M^{\pm 1}].$$
\end{proposition}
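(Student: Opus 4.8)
The plan is to obtain Proposition~\ref{ff} from the torus-knot recurrence \eqref{lem1.1} by the substitution $N=2n+1$, in direct analogy with Propositions~\ref{P_1P_2}, \ref{P_1P_2q_2} and \ref{prop.11}. The only structural difference is that here $G(n)=J_T(2n+1)$ advances the $J_T$-variable by $2$ whenever $n$ advances by $1$, so the single relation \eqref{lem1.1} already suffices and no higher $m$ from Lemma~\ref{J_T} is required.

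First I would read \eqref{lem1.1} as the functional identity $J_T(N+2)-t^{-4pq(N+1)}J_T(N)\in\CR[t^{\pm 2N}]$ and set $N=2n+1$. Since $t^{\pm 2(2n+1)}\in\CR[t^{\pm 2n}]$, this gives $G(n+1)-t^{-8pq(n+1)}G(n)\in\CR[t^{\pm 2n}]$; recognizing $t^{-8pqn}=(t^{2n})^{-4pq}$ as $M^{-4pq}$ turns it into the operator relation
$$(L-t^{-8pq}M^{-4pq})G\in\CR[M^{\pm 1}].$$
The remainder is non-commutative bookkeeping governed by $LM^{k}=t^{2k}M^{k}L$, together with the standing fact that every element of $\CT$ maps $\CR[t^{\pm 2n}]$ into itself, so left multiplication by any operator preserves membership in $\CR[M^{\pm 1}]$. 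Multiplying the displayed relation on the left by $M^{4pq}$ and using $M^{4pq}L=t^{-8pq}LM^{4pq}$ collapses it to $(LM^{4pq}-1)G\in\CR[M^{\pm 1}]$. Multiplying this in turn on the left by $1-t^{8pq}M^{-4pq}L^{-1}=1-t^{8pq}(LM^{4pq})^{-1}$ and using $L^{-1}M^{-4pq}=t^{8pq}M^{-4pq}L^{-1}$ to rewrite the trailing term yields
$$(LM^{4pq}+L^{-1}M^{-4pq}-1-t^{8pq})G\in\CR[M^{\pm 1}],$$
which is exactly Proposition~\ref{ff}.

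I do not expect a genuine obstacle here; the one place to be careful is the $t$-exponent bookkeeping when commuting powers of $L$ past powers of $M$ in the two left multiplications, where a slip would throw off the scalar $t^{8pq}$. As a sanity check, applying $\ve$ kills this scalar and leaves $LM^{4pq}+L^{-1}M^{-4pq}-2$, which up to the unit $M^{4pq}L^{-1}$ factors as $(L-M^{-4pq})^{2}$; this matches the factor $L-M^{-4pq}$ of $A_C$ for $s=2$ occurring to the second power, precisely the shape needed to feed Proposition~\ref{ff} into Lemma~\ref{AJ0} later.
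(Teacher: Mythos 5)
Your proposal is correct and follows essentially the same route as the paper: substitute $N=2n+1$ into the torus-knot recurrence to get $(L-t^{-8pq}M^{-4pq})G\in\CR[M^{\pm 1}]$, normalize to $(LM^{4pq}-1)G\in\CR[M^{\pm 1}]$, and left-multiply by $1-L^{-1}M^{-4pq}$ (your multiplier $1-t^{8pq}M^{-4pq}L^{-1}$ is the same operator, since $t^{8pq}M^{-4pq}L^{-1}=L^{-1}M^{-4pq}$). The commutation bookkeeping checks out, so there is nothing to add.
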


\begin{proof} From \cite[Lemma 1.1]{Tr} we have 
$$J_{T}(2n+3)-t^{-8pq(n+1)}J_{T}(2n+1) \in \CR[t^{\pm 2n}],$$
i.e. $(L-t^{-8pq}M^{-4pq})G \in \CR[M^{\pm 1}].$ It follows that $(LM^{4pq}-1)G \in \CR[M^{\pm 1}]$. Hence
$$(1-L^{-1}M^{-4pq})(LM^{4pq}-1)G \in \CR[M^{\pm 1}].$$
The proposition follows.
\end{proof}

Let $P=LM^{4pq}+L^{-1}M^{-4pq}-1-t^{8pq}$. Note that $\sigma(P)=P$. By Proposition \ref{ff} we have $PG \in  \CR[ M^{\pm 1}]$. By Lemma \ref{tM}, we can choose $Q \in \CR[L^{\pm 1}]$ such that $\sigma(Q)=Q$, $\ve(Q)=(L+L^{-1}-2)^m$ for some $m \ge 1$, and $Q(PG)=0$. Eq. \eqref{f} then implies that $QPM^r(L+t^{-2r}M^{-2r})J_{C}=0.$ Let
$$R := M^{r}QPM^r(L+t^{-2r}M^{-2r})+M^{-r}QPM^{-r}(L^{-1}+t^{-2r}M^{2r}).$$
Then $\sigma(R)=R$. Since $\CA_C$ is invariant under $\sigma$, we have $RJ_{C}=0$. Note that
\begin{eqnarray*}
\ve(R) &=& (LM^{2r}+L^{-1}M^{-2r}-2)(LM^{4pq}+L^{-1}M^{-4pq}-2)(L+L^{-1}-2)^m.
\end{eqnarray*}
Let $S:=(LM^{2r}+L^{-1}M^{-2r}-2)^{m-1}(LM^{4pq}+L^{-1}M^{-4pq}-2)^{m-1}.$ We have $\sigma(S)=S$ and 
\begin{eqnarray*}
\ve(SR) &=& \left( (LM^{2r}+L^{-1}M^{-2r}+2)(LM^{4pq}+L^{-1}M^{-4pq}-2)(L+L^{-1}-2) \right)^m \\
        &=& M^{2(r+2pq)m} L^{-3m} \left( (L-1)(L+M^{-2r})(L-M^{-4pq}) \right)^{2m}.
\end{eqnarray*}

Since $SR \in \CA^{\sigma}_C$ and $\ve(SR)=M^{2(r+2pq)m} L^{-3m} (A_C)^{2m}$, Lemma \ref{AJ0} implies that $\fp \subset \sqrt{\ve(\CA_C^{\sigma})}$. This proves Theorem \ref{th2} in the case $s=2$.

\end{document}